\theoremstyle{plain}
\newtheorem*{theorem*}{Theorem}
\newtheorem{theorem}{Theorem}[section]
\newtheorem{claim}[theorem]{Claim}
\newtheorem*{claim*}{Claim}
\theoremstyle{remark}
\newcommand{\N}{\mathbb{N}}
\newcommand{\E}{\mathbb{E}}
\let\emptyset\varnothing
\let\originalleft\left
\let\originalright\right
\renewcommand{\left}{\mathopen{}\mathclose\bgroup\originalleft}
\renewcommand{\right}{\aftergroup\egroup\originalright}
\begin{document}

\title{An improved lower bound for Folkman's theorem}

\author{J\'{o}zsef Balogh}
\address{Department of Mathematics, University of Illinois, 1409 W.\/ Green Street, Urbana IL 61801, USA}
\email{jobal@math.uiuc.edu}

\author{Sean Eberhard}
\address{London NW5\thinspace3LT, UK}
\email{eberhard.math@gmail.com}

\author{Bhargav Narayanan}
\address{Department of Pure Mathematics and Mathematical Statistics, University of Cambridge, Wilberforce Road, Cambridge CB3\thinspace0WB, UK}
\email{b.p.narayanan@dpmms.cam.ac.uk}

\author{Andrew Treglown}
\address{School of Mathematics,	University of Birmingham, Edgbaston, Birmingham, B15\thinspace2TT, UK}
\email{a.c.treglown@bham.ac.uk}

\author{Adam Zsolt Wagner}
\address{Department of Mathematics, University of Illinois, 1409 W.\/ Green Street, Urbana IL 61801, USA}
\email{zawagne2@illinois.edu}

\date{27 February 2017}

\subjclass[2010]{Primary 05D10; Secondary 05D40}

\begin{abstract}
Folkman's theorem asserts that for each $k \in \N$, there exists a natural number $n = F(k)$ such that whenever the elements of $[n]$ are two-coloured, there exists a set $A \subset [n]$  of size $k$ with the property that all the sums of the form $\sum_{x \in B} x$, where $B$ is a nonempty subset of $A$, are contained in $[n]$ and have the same colour. In 1989, Erd\H{o}s and Spencer showed that $F(k) \ge 2^{ck^2/ \log k}$, where $c >0$ is an absolute constant; here, we improve this bound significantly by showing that $F(k) \ge 2^{2^{k-1}/k}$ for all $k\in \N$. 
\end{abstract}
\maketitle

\section{Introduction}
Schur's theorem, proved in 1916, is one of the central results of Ramsey theory and asserts that whenever the elements of $\N$ are finitely coloured, there exists a monochromatic set of the form $\{x, y, x+y\}$ for some $x , y \in \N$. About fifty years ago, a wide generalisation of Schur's theorem was obtained independently by Folkman, Rado and Sanders, and this generalisation is now commonly referred to as Folkman's theorem (see~\citep{graham}, for example). To state Folkman's theorem, it will be convenient to have some notation. For $n \in \N$, we write $[n]$ for the set $\{1, 2, \dots, n\}$, and for a finite set $A \subset \N$, let 
\[S(A) = \left \{ \sum_{x \in B} x : B \subset A \text{ and } B \ne \emptyset \right \}\]
denote the set of all \emph{finite sums} of $A$. In this language, Folkman's theorem states that for all $k,r\in\mathbb{N}$, there exists a natural number $n = F(k,r)$ such that whenever the elements of $[n]$ are $r$-coloured, there exists a set $A\subset [n]$ of size $k$ such that $S(A)$ is a monochromatic subset of $[n]$; of course, it is easy to see that Folkman's theorem, in the case where $k = 2$, implies Schur's theorem.

In this note, we shall be concerned with lower bounds for the two-colour Folkman numbers, i.e., for the quantity $F(k) = F(k,2)$. In 1989, Erd\H{o}s and Spencer~\citep{erdosspencer} proved that 
\begin{equation}\label{es}
F(k)\geq 2^{ck^2/\log k}
\end{equation} 
for all $k \in \N$, where $c>0$ is an absolute constant; here, and in what follows, all logarithms are to the base $2$. Our primary aim in this note is to improve~\eqref{es}.

Before we state and prove our main result, let us say a few words about the proof of~\eqref{es}. Erd\H{o}s and Spencer establish~\eqref{es} by considering uniformly random two-colourings. In particular, they show that if $[n]$ is two-coloured uniformly at random and additionally $n \le 2^{ck^2/\log k}$ for some suitably small absolute constant $c>0$, then with high probability, there is no $k$-set $A \subset [n]$ for which $S(A)$ is monochromatic. On the other hand, it is not hard to check that if $n \ge 2^{Ck^2}$ for some suitably large absolute constant $C>0$, then a two-colouring of $[n]$ chosen uniformly at random is such that, with high probability, there exists a set $A \subset [n]$ of size $k$ for which $S(A)$ is monochromatic; indeed, to see this, it is sufficient to restrict our attention to sets of the form $\{p, 2p, \dots, kp\}$, where $p$ is a prime in the interval $[n/\log^2 n, 2n/ \log^2 n]$, and notice that the sets of finite sums of such sets all have size $k(k+1)/2$ and are pairwise disjoint. With perhaps this fact in mind, in their paper, Erd\H{o}s and Spencer also describe some of their attempts at removing the factor of $\log k$ in the exponent in~\eqref{es}; nevertheless, their bound has not been improved upon since. 

Our main contribution is a new, doubly exponential, lower bound for $F(k)$, significantly strengthening the bound due to Erd\H{o}s and Spencer. 

\begin{theorem}\label{mainthm}
For all  $k \in \N$, we have 
\begin{equation}\label{nb}
F(k)\geq 2^{2^{k-1}/k}.
\end{equation}
\end{theorem}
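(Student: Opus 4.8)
The plan is to prove the equivalent statement that for every $n$ with $k\log n < 2^{k-1}$ there is a two-colouring $\chi\colon[n]\to\{0,1\}$ admitting no $k$-set $A\subset[n]$ with $S(A)\subseteq[n]$ monochromatic; call such a set $A$ \emph{bad}. The constraint $k\log n<2^{k-1}$ is to be used in exactly one place: it guarantees that the number $n^{k}$ of candidate $k$-tuples $(a_1,\dots,a_k)$ is smaller than $2^{2^{k-1}}$, and this is the sole source of the claimed bound. Throughout, for a $k$-set $A$ one tracks $s(A):=|S(A)|$, the number of \emph{distinct} subset sums, noting $\binom{k+1}{2}\le s(A)\le 2^{k}-1$.

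The first step is to observe that a \emph{uniformly random} colouring $\chi$ disposes of all $A$ whose sumset is large. Indeed, for a fixed $k$-set $A$ the colours of the $s(A)$ distinct elements of $S(A)$ are independent fair coin flips, so $\Pr[A\text{ bad}]=2^{1-s(A)}$; consequently the expected number of bad $A$ with $s(A)\ge 2^{k-1}+k$ is at most $n^{k}\,2^{1-2^{k-1}-k}<2^{1-k}$. Thus a random colouring is, with high probability, good for every $A$ with $s(A)$ bounded below by roughly $2^{k-1}$ — in particular for every dissociated $A$ (all $2^{k}-1$ subset sums distinct), and more generally for any $A$ whose $a_i$ satisfy only a bounded number of additive relations $\sum_{i\in B}a_i=\sum_{i\in C}a_i$. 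The obstruction is the \emph{compressed} sets: those for which the $a_i$ satisfy so many relations that $s(A)$ drops far below $2^{k-1}$, the extreme case being a dilate $A=p\cdot\{1,2,\dots,k\}$, for which $S(A)=\{p,2p,\dots,\binom{k+1}{2}p\}$ and $s(A)=\binom{k+1}{2}$. There are of order $n$ such dilates, and since $2^{k-1}/k\gg\binom{k+1}{2}$ for large $k$, the union bound over them fails by a wide margin; so the random colouring alone cannot work, and the compressed sets must be handled by design.

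To deal with compressed sets I would prove a structural dichotomy of the following shape: there is a threshold $t=t(k)$, with $t$ safely above $k\log n$, such that every $k$-set $A$ with $S(A)\subseteq[n]$ either (i) has $s(A)\ge t$, and so is killed by the random part with room to spare, or (ii) is within bounded multiplicative distance of a dilate $p\cdot U$ of one of a \emph{bounded} list of fixed shapes $U$, each of which is itself of the form $S(A')$ and hence satisfies $\max U/\min U\ge k$. The enumeration of the compressed shapes in (ii) — showing that few subset-sum collisions forces genuine rigidity of $A$, and that a bounded number of shapes suffices — is the combinatorial core. Given (ii), it then suffices to superimpose on the random colouring a \emph{deterministic, scale-based} pattern — e.g. one read off from a suitable base-$c$ digit expansion with $c=c(k)$ — designed so that, for every relevant shape $U$ and every $p$ with $p\cdot\max U\le n$, the dilate $p\cdot U$ is \emph{not} monochromatic under the pattern; this is forced because $\max U/\min U\ge k$ means $p\cdot U$ straddles enough scales that an appropriately engineered pattern must split it. Choosing this pattern to act only at a coarse scale (coarser than the fine-scale mixing provided by the random bits) ensures it introduces no new monochromatic configurations among the spread sets already controlled in Step~2. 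The final colouring is then a suitable amalgam — morally an XOR — of the random colouring and this explicit scale pattern.

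The main obstacle is Step~(ii): one must pin down precisely which sets are ``compressed'', prove that for each such $A$ the sumset $S(A)$ really does inherit the dilate structure $p\cdot U$ with $U$ drawn from a controlled finite family, and then exhibit a \emph{single} deterministic scale pattern that simultaneously splits every dilate of every relevant shape lying inside $[n]$ while remaining harmless to the random part. Controlling all the compressed shapes at once, and managing the interaction between the deterministic and random components so that neither spoils the work of the other, is the crux of the argument; the gap between $2^{k-1}$ and $2^{k}$ in the exponent is precisely the overhead one pays for splitting the problem into these two regimes.
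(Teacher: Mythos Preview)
Your proposal is not a proof: the decisive step~(ii) --- the ``structural dichotomy'' asserting that every $A$ with $s(A)$ below some threshold $t(k)$ is essentially a dilate of one of boundedly many fixed shapes, together with the construction of a single deterministic scale pattern that splits every such dilate without interfering with the random part --- is stated only as an intention. You yourself flag it as ``the main obstacle'' and ``the crux of the argument''; as written, nothing is established beyond the easy observation that a uniformly random colouring handles sets with $s(A)\gtrsim 2^{k-1}$. There is also reason to doubt the dichotomy in the strong form you need: between the extremes $s(A)=\binom{k+1}{2}$ and $s(A)=2^{k}-1$ lies an enormous range, and sets with intermediate $s(A)$ can be built, for instance, by taking a few small elements together with several large dissociated ones, producing structures that are not close to any dilate of a fixed shape. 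Pinning down a finite list of shapes $U$, and then designing one pattern that simultaneously two-colours every dilate $p\cdot U$ inside $[n]$, is a substantial project that you have not begun.

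The paper sidesteps all of this with one idea you are missing. Rather than a uniformly random colouring plus a deterministic overlay, it uses a \emph{single} random colouring with one built-in constraint: colour the odd integers independently and uniformly, and then force $2x$ to receive the opposite colour to $x$ for every $x$. This kills every compressed set at a stroke. Indeed, if $|S(A)|\le 2^{k}-2$ then two distinct subsets of $A$ share a sum; making them disjoint, say with common sum $s$, their union has sum $2s$, so $s,2s\in S(A)$ and $S(A)$ is automatically non-monochromatic. For the remaining case $|S(A)|=2^{k}-1$, one checks that $S(A)$ meets at least $2^{k-1}$ of the geometric progressions $G_m=\{m,2m,4m,\dots\}$ (take $s$ maximal with $2^{s}\mid a$ for all $a\in A$; exactly $2^{k-1}$ elements of $S(A)$ are divisible by $2^{s}$ but not $2^{s+1}$). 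Since colours across different $G_m$ are independent, $\Pr[S(A)\text{ monochromatic}]\le 2^{1-2^{k-1}}$, and a union bound over $\binom{n}{k}\le (en/k)^{k}$ sets finishes. The ``doubling'' constraint is precisely the deterministic pattern you were groping for, and it comes with its own proof that it splits every compressed $S(A)$ --- no shape classification is needed.
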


This short note is organised as follows. We give the proof of Theorem~\ref{mainthm} in Section~\ref{pf} and conclude with some remarks in Section~\ref{conc}.

\section{Proof of the main result}\label{pf}
In this section, we give the proof of Theorem~\ref{mainthm}.
\begin{proof}[Proof of Theorem~\ref{mainthm}]
The result is easily verified when $k \le 3$, so suppose that $k\geq 4$  and let $n = \lfloor 2^{2^{k-1}/k} \rfloor$. In the light of our earlier remarks, a uniformly random colouring of $[n]$ is a poor candidate for establishing~\eqref{nb}. Instead, we generate a (random) red-blue colouring of $[n]$ as follows: we first red-blue colour the odd elements of $[n]$ uniformly at random, and then extend this colouring uniquely to all of $[n]$ by insisting that the colour of $2x$ be different from the colour of $x$ for each $x \in [n]$; hence, for example, if $5$ is initially coloured blue, then $10$ gets coloured red, $20$ gets coloured blue, and so on.
	
Fix a set $A \subset [n]$ of size $k$ with $S(A)\subset [n]$. We have the following estimate for the probability that $S(A)$ is monochromatic in our colouring.

\begin{claim} $\mathbb{P}(S(A) \text{ is monochromatic}) \le 2^{1-2^{k-1}}$.
\end{claim}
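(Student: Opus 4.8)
The plan is to first recast the random colouring algebraically. Identify colours with $\mathbb{Z}/2$, and for each odd $o\in[n]$ let $\chi(o)\in\mathbb{Z}/2$ be its (uniform, independent) initial colour. The rule forcing $2x$ to differ from $x$ then determines the colour of a general $m=2^{a}o$ (with $o$ odd) to be $\chi(o)+a\pmod 2$; write $v(m)=a$ for the $2$-adic valuation and $\mathrm{odd}(m)=o$. For nonempty $B\subseteq A$ put $\sigma(B)=\sum_{x\in B}x$. Then $S(A)$ is monochromatic of colour $\epsilon$ if and only if $\chi(\mathrm{odd}(\sigma(B)))\equiv\epsilon+v(\sigma(B))\pmod 2$ for every nonempty $B\subseteq A$, so $\mathbb{P}(S(A)\text{ monochromatic})=\sum_{\epsilon\in\mathbb{Z}/2}\mathbb{P}(\text{this system holds})$ and the whole problem reduces to a linear system in the independent fair bits $\{\chi(o)\}$.

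The argument then splits according to whether $\sigma$ is injective on the nonempty subsets of $A$. If it is \emph{not} injective, take distinct nonempty $B_1,B_2$ with $\sigma(B_1)=\sigma(B_2)$ and set $C=B_1\setminus B_2$, $D=B_2\setminus B_1$; a one-line check gives that $C,D$ are nonempty, disjoint, and $\sigma(C)=\sigma(D)$. Then $C$ and $C\cup D$ are nonempty subsets of $A$ with $\sigma(C\cup D)=2\sigma(C)$, hence with the same odd part but $2$-adic valuations of opposite parity; the displayed system is contradictory for either choice of $\epsilon$, so $\mathbb{P}(S(A)\text{ monochromatic})=0$ and we are done in this case.

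If $\sigma$ \emph{is} injective, the goal is to exhibit at least $2^{k-1}$ distinct values among $\{\mathrm{odd}(\sigma(B)):\emptyset\neq B\subseteq A\}$; each such value $o$ pins $\chi(o)$ down to one value once $\epsilon$ is fixed, so the system holds with probability at most $2^{-2^{k-1}}$ for each $\epsilon$, and summing over $\epsilon$ yields the claimed $2^{1-2^{k-1}}$. To produce these odd parts, let $v_0=\min_{x\in A}v(x)$, put $A_0=\{x\in A:v(x)=v_0\}$ and $m=|A_0|\ge 1$. Reducing modulo $2^{v_0+1}$ shows $\sigma(B)/2^{v_0}\equiv|B\cap A_0|\pmod 2$, so $v(\sigma(B))=v_0$ exactly when $|B\cap A_0|$ is odd, and the number of such $B$ is $2^{m-1}\cdot 2^{k-m}=2^{k-1}$. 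These $B$ are all nonempty; by injectivity their sums are distinct; and since each such sum is $2^{v_0}$ times an odd number, their odd parts are $2^{k-1}$ distinct odd numbers, as required.

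There is no single technically hard step; the conceptual crux is recognising that (i) failure of injectivity of $\sigma$ is a \emph{help}, since it manufactures a pair of sums $s$ and $2s$ and hence probability zero, rather than an obstruction, and (ii) one should count the subsets of \emph{minimal} $2$-adic valuation — of which there are exactly $2^{k-1}$ — instead of attempting to control all $2^{k}-1$ finite sums simultaneously. The remaining points (that $v(\sigma(B))$ is governed mod $2^{v_0+1}$ by the parity of $|B\cap A_0|$, and that equal-sum distinct nonempty sets give disjoint equal-sum nonempty sets) are routine.
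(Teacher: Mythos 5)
Your proof is correct and follows essentially the same route as the paper: the non-injective case produces a pair $s,2s\in S(A)$ forcing two colours (probability $0$), and in the injective case the $2^{k-1}$ subsets meeting the minimal-valuation elements an odd number of times yield $2^{k-1}$ sums with distinct odd parts, i.e.\ lying in distinct chains $\{m,2m,4m,\dots\}$, whose colours are independent fair bits, giving the bound $2\cdot 2^{-2^{k-1}}$. The only difference is presentational: you phrase the colouring as $\chi(o)+v_2(m)\bmod 2$ and a linear system, where the paper speaks of a transversal of the geometric progressions, and you spell out the mod-$2^{v_0+1}$ count that the paper dismisses as ``clear''.
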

\begin{proof}
First, if $|S(A)|\leq 2^k-2$, then it is easy to see from the pigeonhole principle that there exist two subsets $B_1,B_2\subset A$ such that $\sum_{x\in B_1}x = \sum_{x\in B_2}x$, and by removing $B_1 \cap B_2$ from both $B_1$ and $B_2$ if necessary, these sets may further be assumed to be disjoint; in particular, this implies that $S(A)$ contains two elements one of which is twice the other. It therefore follows from the definition of our colouring that $S(A)$ cannot be monochromatic unless $|S(A)| = 2^k-1$.

Next, suppose that $|S(A)|=2^k - 1$. For each odd integer $m \in \N$, we define $G_m=\{m,2m,4m,\dots\}\cap [n]$, and note that these geometric progressions partition $[n]$. Observe that $S(A)$ intersects at least $2^{k-1}$ of these progressions. Indeed, if there is an odd integer $r\in A$ for example, then $S(A)$ contains exactly $2^{k-1}$ distinct odd elements and these elements must lie in different progressions. More generally, if each element of $A$ is divisible by $2^s$ and $s$ is maximal, then there exists an element $r$ of $A$ with $r=2^st$, where $t$ is odd; it is then clear that precisely $2^{k-1}$ elements of $S(A)$ are divisible by $2^{s}$ but not by $2^{s+1}$ and these elements must necessarily lie in different progressions. With this in mind, we define $B_A$ to be a maximal subset of $S(A)$ with the property $|B_A \cap G_m|\leq 1$ for each $m$; for example, we may take $B_A$ to consist of the least elements (where they exist) of the sets $S(A)\cap G_m$. Clearly, our red-blue colouring restricted to $B_A$ is a uniformly random colouring, so the probability that $B_A$ is monochromatic is $2^{1-|B_A|}$; it follows that the probability that $S(A)$ is monochromatic is at most $2^{1-|B_A|} \le 2^{1-2^{k-1}}$.
\end{proof}
It is now easy to see that if $X$ is the number of sets $A \subset [n]$ of size $k$ for which $S(A)$ is a monochromatic subset of $[n]$ in our colouring, then
\[
\E [X] \leq \binom{n}{k} 2^{1-2^{k-1}}\leq\left(\frac{en}{k}\right)^k 2^{1-2^{k-1}} 
\le \left( \frac{e2^{2^{k-1}/k}}{k} \right)^k\left(2^{1-2^{k-1}}\right)= 2\left(\frac{e}{k}\right)^k<1,
\]
where the last inequality holds for all $k \ge 4$. Hence, there exists a red-blue colouring of $[n]$ without any set $A$ of size $k$ for which $S(A)$ is a monochromatic subset of $[n]$, proving the result.
\end{proof}

\section{Conclusion}\label{conc}
We conclude this note with two remarks. First, using the original arguments of Erd\H{os} and Spencer~\cite{erdosspencer} in conjunction with an inverse Littlewood--Offord theorem of Nguyen and Vu~\cite{nguyenvu}, it is possible to improve~\eqref{es} (up to removing the factor of $\log k$ in the exponent) by just considering uniformly random two-colourings. Second, we note that while~\eqref{nb} improves significantly on~\eqref{es}, this lower bound is still considerably far from the best upper bound for $F(k)$, which is of tower type; see~\citep{ub}, for instance.

\section*{Acknowledgements}
The first author was partially supported by NSF Grant DMS-1500121 and an Arnold O. Beckman Research Award (UIUC Campus Research Board 15006). The fourth author would like to acknowledge support from EPSRC grant EP/M016641/1.

Some of the research in this paper was carried out while the first author was a Visiting Fellow Commoner at Trinity College, Cambridge and the fourth and fifth authors were visiting the University of Cambridge; we are grateful for the hospitality of both the College and the University.

\bibliographystyle{amsplain}
\bibliography{folkman}

\end{document}